\documentclass[A4,11pt]{amsart}
\usepackage{amsthm,amsmath,amssymb,eucal}
\usepackage{geometry}
\usepackage{resizegather}
\usepackage{graphicx}
\usepackage{color}
\usepackage[normalem]{ulem}
\usepackage[latin1]{inputenc}
\usepackage{mathtools}
\usepackage{url}
\mathtoolsset{showonlyrefs}

\newtheorem{claim}{Claim}[section]
\newtheorem{theorem}[claim]{Theorem}

\newtheorem{lemma}[claim]{Lemma}
\newtheorem{corollary}[claim]{Corollary}
\theoremstyle{definition}
\newtheorem{remark}[claim]{Remark}
\newtheorem{example}[claim]{Example}

\definecolor{jgreen}{rgb}{0.8, 0.8, 0.2}

\newcommand{\soutg}{\bgroup\markoverwith{\textcolor{green}{\rule[.5ex]{2pt}{1pt}}}\ULon}
\newcommand{\soutb}{\bgroup\markoverwith{\textcolor{blue}{\rule[.5ex]{2pt}{1pt}}}\ULon}
\newcommand{\soutr}{\bgroup\markoverwith{\textcolor{red}{\rule[.5ex]{2pt}{1pt}}}\ULon}

\makeatletter
\@namedef{subjclassname@2020}{%
  \textup{2020} Mathematics Subject Classification}
\makeatother

\newcommand{\so}{{\rm o}}
\newcommand{\ds}{\displaystyle}
\newcommand{\dsum}{\ds\sum}

\newcommand{\eqskip}{ \vspace*{2mm}\\ }
\newcommand{\fr}[2]{\frac{\ds #1}{\ds #2}}

\DeclareMathOperator*{\re}{Re}
\newcommand{\tto}{ \mathcal{T} }

\newcommand{\ham}{T}
\newcommand{\hamab}{\ham_{\alpha,\beta}}

\newcommand{\R}{\mathbb{R}}

\title[Spectral determinant for second order elliptic operators on the real line]{The spectral determinant for second order elliptic operators on the real line}

\author{Pedro Freitas} 
\author{Ji\v{r}\'{\i} Lipovsk\'{y}}

\address{Grupo de F\'{\i}sica Matem\'{a}tica, Instituto Superior T\'{e}cnico, Universidade de Lisboa, Av. Rovisco Pais 1,
P-1049-001 Lisboa, Portugal}
\email{pedrodefreitas@tecnico.ulisboa.pt}
\address{Department of Physics, Faculty of Science, University of Hradec Kr\'alov\'e, Rokitansk\'eho 62,
500\,03 Hradec Kr\'alov\'e, Czechia}
\email{jiri.lipovsky@uhk.cz}

\date{\today}

\begin{document}

\begin{abstract}
 We derive an expression for the spectral determinant of a second-order elliptic differential operator~$\tto$ defined on the
 whole real line, in terms of the Wronskians of two particular solutions of the equation $\tto u=0$. Examples of application of the resulting
 formula include the explicit calculation of the determinant of harmonic and anharmonic oscillators with an added bounded potential with compact support.
\end{abstract}

\keywords{Elliptic operators; eigenvalues; spectral determinant}
\subjclass[2020]{Primary: 34L05; Secondary: 34L40; 58J52}

\maketitle

\section{Introduction}
Given an elliptic differential operator $\tto$ with discrete spectrum $\lambda_{1}\leq\lambda_{2}\leq \dots$,
we define the associated zeta function by the series
\[
 \zeta_{\tto}(s) = \dsum_{k=1}^{\infty} \fr{1}{\lambda_{k}^s},
\]
convergent on some half-plane $\re(s)>\sigma_{0}$. Zeta functions associated in this way with the Laplacian
and other elliptic operators have been the object of study for more than seventy years, dating at least
as far back as the $1949$ paper by S. Minakshisundaram and \AA. Pleijel~\cite{MP}. In the case of the Laplacian
defined on a closed manifold, they showed, among other things, that the corresponding function of a complex
variable $\zeta_{\Delta}$ may be extended as a meromorphic function to the entire complex plane and that, in particular,
it is well-defined and analytic at the origin.

One focus of attention in the study of $\zeta_{\tto}$ are the values taken by this function at specific points in
the complex plane, as well as the residues at its poles -- see~\cite{Vor80}, for instance. Here the value of $\zeta_{\tto}'(0)$
is of particular importance as it can be used to define a determinant associated with $\tto$~\cite{rasi,geya}. More precisely,
for such a class of operators, and by invoking a formal analogy with the finite-dimensional case, we may then use the expression
\[
 \det\left(\tto\right) := e^{-\zeta_{\tto}'(0)},
\]
to define the spectral (or functional) determinant of the operator $\tto$. In some cases the spectrum is known explicitly
and then expressions for the function $\zeta_{\tto}$ and the corresponding determinant may be found also in explicit form -- see~\cite{cf23}
and the references therein, for instance. However, it also happens that even when the spectrum is known explicitly it may be difficult to
find an explicit expression for the corresponding determinant~\cite{bgke,cf23,Fre18}. On the other hand, there are also examples where, although eigenvalues may
not be written explicitly in closed form, it is possible to do so for the determinant, one such case being the anharmonic oscillator which may be found in \cite{Vor80, Vor99}, and which we use in  Section 4 below~-- see also~\cite{aursal,frli20} for other examples.

In the case of operators defined on unbounded domains, and although there are some results on the half-line (see~\cite{geki19,geki19b,haleve}, for instance), to the best of our knowledge the case of Schr\"{o}dinger operators defined on the whole line has not been addressed -- see also Section 4.4 in~\cite{Vor00}, where an approach based on the combination of the results for two half-lines is considered in a particular case.
The purpose of this paper is thus to derive and prove a formula for the determinant in the case of second-order differential operators on the whole real line.
To achieve this, we first identify an appropriate setting in which the technique used in~\cite{LS77} for the case of a bounded interval may be extended to $\R$.
In fact, this turns out to be a crucial step. Apart from requiring that the spectrum be discrete with finite multiplicities, these conditions turn out to
be closely associated with the operator being in the limit-point case at $\pm \infty$ in Weyl's classification~\cite{weyl10}, in the situation where one non-trivial solution in $L^{2}\left(\R^{\pm}\right)$ exists. Potentials satisfying these conditions include, for instance, those of the form $|x|^\beta + q(x)$ for any positive real numbers $\beta$ and $q$ being a sufficiently
regular compactly supported function -- see Corollary~\ref{cor:fedosova}.

We describe the appropriate setting in detail in the next section, where we also state
our main results. The proof of the main (general) result is done in Section~\ref{sec:proofmain}, while the study of the 
perturbed anharmonic potential is presented in Section~\ref{sec:anharmonic}. In the last section we provide some explicit examples.

\section{Background and main results}

In the particular case of a Sturm-Liouville operator of the form $H_{\alpha} = -d^2/dx^2 + \alpha q$ on the interval $(0,1)$ with Dirichlet boundary conditions, for a smooth potential $q$, Levit and Smilansky~\cite{LS77}
produced the following elegant formula for the determinant, namely, $\det(H_{\alpha}) = 2y(1)$, where $y$ is the solution of the initial value
problem
\begin{equation}\label{initsl}
\left\{
\begin{array}{l}
 -y''(t) + \alpha q(t)y(t) = 0\,,\eqskip
 y(0) = 0, \; y'(0) = 1\,.
\end{array}
\right.
\end{equation}
Although Levit and Smilansky choose $\alpha \in [0,1]$, it can be chosen to be real.
In order to derive the corresponding formula for the determinant of a similar operator, now defined on the whole of the real line,
we shall adapt the argument in~\cite{LS77} to this situation. As a first step, we remark that
the above determinant for nonzero values of $\alpha$ may be related to the determinant at vanishing $\alpha$ by
\begin{equation}\label{wronskian1}
 \det(H_{\alpha}) = \fr{W(\alpha)}{W(0)} \det(H_{0}),
\end{equation}
where $W$ denotes the Wronskian associated with the solution of~\eqref{initsl} and the solution of the same differential equation satisfying the conditions $y(1)=0$,
$y'(1)=-1$ -- note that for $\alpha\neq 0$ the two solutions are either linearly independent or they are (non-zero) multiples of each other. In the latter case, both $W$ and the determinant vanish, as zero is an eigenvalue for that value of $\alpha$. For $\alpha$ equal to zero, should the two solutions be linearly dependent, then the above identity is not applicable.  In the case of the whole real line, and in order to be able to derive an analogous formula, we will need to impose certain (natural) conditions 
on the class of potentials that may be allowed. We shall thus consider operators of the form
\[
 T_{\alpha} = -\fr{d^{2}}{dx^{2}} + V(x) + \alpha q(x),
\]
where the potentials $V$ and $q$ are such that $T_{\alpha}$ is bounded from below and has discrete spectrum of the form
\begin{equation}\label{asymptotic}
 \lambda_{1} < \lambda_{2} < \dots <\lambda_{k} < \dots 
 \mbox{ with } \lambda_{k} = c k^{\tau} + \so\left(k^{\tau}\right),
\end{equation}
for some positive $\tau$. Note that the value of $\tau$ will, in general, be independent of $\alpha$ for $q$
bounded with compact support, for instance. This will be the case for potentials $V$ of the form $|x|^\beta$ for
positive real $\beta$, for instance~\cite{FN21}; see also~\cite{ms} for similar results for more general
potentials. From~\cite{hawi1} (see also~\cite{hawi2}) this implies
that not only does the operator $T_{\alpha}$ correspond to the limit-point case at $\pm \infty$,
but we also have the existence of unique (modulus multiplication by a non-zero constant factor that we choose to be $\alpha$-independent) solutions $y_{\pm}$ which are in $L^{2}(\R^{\pm})$
and, in fact, converge to zero as $t$ goes to $\pm \infty$. This is a key ingredient which will ensure the existence of solutions for two initial
value problems, allowing us to derive the determinant by means of the corresponding Wronskians in the same way as given by~\eqref{wronskian1} above.
We point out that related solutions also play a role in the context of determining exact WKB solutions, where they are sometimes referred to as {\it canonical recessive} solutions~\cite{Vor12}. Wronskians for such problems on the whole real line have also been considered in relation to Fredholm determinants, albeit under different conditions on the potential $V$~\cite{gesz86}.

In order to be able to use a method inspired by the proof given in~\cite{LS77}, we need one second ingredient, namely, that the eigenvalues $\lambda_{k}$
satisfy the asymptotic condition
\begin{equation}\label{spectr_cond}
 \fr{\lambda_k(\alpha)}{\lambda_k(0)} = 1+\mathcal{O}\left(k^{-1-\varepsilon}\right)\; \mbox{ as } k\to\infty,
\end{equation}
for some $\varepsilon>0$. This condition ensures that the resulting infinite product of the quotient above will be convergent, allowing for the comparison between $\det(\alpha)$ and $\det(0)$. As in the case of equation~\eqref{asymptotic} above, this will hold for a large class of potentials $V$ and $q$, such as
anharmonic potentials $V$ of the form $|x|^{\beta}$ for positive real $\beta$ and $q$ having
compact support~\cite{FN21}. This restriction on the perturbation potential $q$ will be used throughout the paper,
as it is required in~\cite{FN21} to ensure the condition~\eqref{spectr_cond} above, and has also been imposed in other works such as when studying trace formul\ae\ for perturbations of the harmonic oscillator in~\cite{PS06}. It also yields an additional consequence, namely, the fact that the solutions $y_{\pm}$ introduced above will be independent of $\alpha$ on the intervals $(\mathrm{sup\,} \mathrm{supp\,}(q),\infty)$ and
$(-\infty,\mathrm{inf\,} \mathrm{supp\,}(q))$, respectively. We note, however, that there are results on related issues 
with different restrictions on the potentials $q$, ranging from imposing a certain rate of decay at infinity~\cite{abp} to $q$ being in $L^{1}\left(\R\right)$~\cite{frke16}.
We thus believe that the condition that $q$ has compact support is of a more technical nature, but at this point we need it in order to obtain our results.

We are now ready to state our main result for general potentials $V$ and a corresponding perturbation $q$ ensuring the above conditions~\eqref{asymptotic} and~\eqref{spectr_cond}.
\begin{theorem}\label{thm:maingen} Let $T_{\alpha}$ be the Schr\"{o}dinger operator defined on the real line by
\[
 T_{\alpha} = -\fr{d^{2}}{d x^2} + V(x) + \alpha q(x),
\]
where $q$ is bounded with compact support $\mathrm{supp\,}(q)\subset[a,b]$, $\alpha \in \mathbb{R}$, and $V$ is such that $T_{\alpha}$ has
discrete spectrum satisfying conditions~\eqref{asymptotic} and~\eqref{spectr_cond}. Then, the spectral determinant
for the operator $T_{\alpha}$ on $\mathbb{R}$ is well-defined and, assuming that $\det(T_{0})$ does not vanish, it satisfies
$$
  \mathrm{det}\left(T_{\alpha}\right) = \fr{W(\alpha)}{W(0)} \mathrm{det}\left(T_{0}\right)\,,
$$
where $W(\alpha)$ is the Wronskian of the solutions $y_{\pm}$ belonging to $L^{2}\left(\R^{\pm}\right)$.
\end{theorem}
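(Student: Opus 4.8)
The plan is to adapt the Levit--Smilansky comparison argument to the line: relate $\det(T_{\alpha})$ to $\det(T_{0})$ through their ratio, identify that ratio on the one hand with the convergent product $\prod_{k}\lambda_{k}(\alpha)/\lambda_{k}(0)$ and on the other with $W(\alpha)/W(0)$. Throughout, $W(\mu,\alpha)$ denotes the Wronskian (constant in $x$, by Abel's identity) of the recessive solutions $y_{\pm}(\cdot,\mu,\alpha)\in L^{2}(\R^{\pm})$ of $T_{\alpha}u=\mu u$, so that $W(\alpha)=W(0,\alpha)$ and $W(\cdot,\alpha)$ vanishes exactly on $\mathrm{spec}(T_{\alpha})$. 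If $0\in\mathrm{spec}(T_{\alpha})$ then $\det(T_{\alpha})=W(\alpha)=0$ and there is nothing to prove, so I assume $0\notin\mathrm{spec}(T_{\alpha})\cup\mathrm{spec}(T_{0})$, the second exclusion being the hypothesis $\det(T_{0})\neq0$.

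First I would carry out the purely zeta-theoretic reduction. Set $\Delta(s):=\zeta_{T_{\alpha}}(s)-\zeta_{T_{0}}(s)=\sum_{k}\bigl(\lambda_{k}(\alpha)^{-s}-\lambda_{k}(0)^{-s}\bigr)$. Since $\lambda_{k}(\alpha)^{-s}-\lambda_{k}(0)^{-s}=\lambda_{k}(0)^{-s}\bigl((\lambda_{k}(\alpha)/\lambda_{k}(0))^{-s}-1\bigr)$, conditions~\eqref{spectr_cond} and~\eqref{asymptotic} make each term $\mathcal{O}\bigl(k^{-\tau\re(s)-1-\varepsilon}\bigr)$ uniformly for $s$ in compact sets, so the series converges locally uniformly on $\re(s)>-\varepsilon/\tau$ and $\Delta$ is holomorphic near $0$; in particular $\zeta_{T_{\alpha}}=\zeta_{T_{0}}+\Delta$ is regular at the origin and $\det(T_{\alpha})$ is well-defined. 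Differentiating term by term at $s=0$ gives $\Delta'(0)=-\sum_{k}\log(\lambda_{k}(\alpha)/\lambda_{k}(0))$, whence
\[
  \frac{\det(T_{\alpha})}{\det(T_{0})}=e^{-\Delta'(0)}=\prod_{k\geq1}\frac{\lambda_{k}(\alpha)}{\lambda_{k}(0)},
\]
the product converging absolutely by~\eqref{spectr_cond}.

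Next I would study the characteristic function. By the limit-point hypothesis and the facts recalled in Section~2, $y_{\pm}(\cdot,\mu,\alpha)$ exist and are unique up to the fixed $\alpha$-independent normalization, and by the standard Weyl-disc (normal families) construction they may be chosen to be entire in $\mu$; thus $W(\cdot,\alpha)$ is entire, with zero set $\{\lambda_{k}(\alpha)\}_{k}$ (simple zeros, by~\eqref{asymptotic}) and of finite order $1/\tau$ (again by~\eqref{asymptotic}). Taking Hadamard factorizations $W(\mu,\alpha)=W(0,\alpha)\,e^{g_{\alpha}(\mu)}\prod_{k}E_{p}(\mu/\lambda_{k}(\alpha))$ with $g_{\alpha}$ a polynomial, $g_{\alpha}(0)=0$, and forming the quotient, condition~\eqref{spectr_cond} forces $\sum_{k}(\lambda_{k}(\alpha)^{-j}-\lambda_{k}(0)^{-j})$ to converge for every $j\leq p$, which collapses the genus-$p$ exponential corrections into one polynomial and leaves
\[
  \frac{W(\mu,\alpha)}{W(\mu,0)}=\frac{W(0,\alpha)}{W(0,0)}\,e^{R(\mu)}\prod_{k\geq1}\frac{1-\mu/\lambda_{k}(\alpha)}{1-\mu/\lambda_{k}(0)},
\]
with $R$ a polynomial, $R(0)=0$, and the product absolutely convergent and, for $\mu\leq0$, uniformly convergent (there $\bigl|\frac{1-\mu/\lambda_{k}(\alpha)}{1-\mu/\lambda_{k}(0)}-1\bigr|\leq|1-\lambda_{k}(0)/\lambda_{k}(\alpha)|=\mathcal{O}(k^{-1-\varepsilon})$).

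Finally I would send $\mu\to-\infty$ along the negative real axis. Because $q$ is bounded with $\mathrm{supp}(q)\subset[a,b]$, the $y_{\pm}$ coincide with their $\alpha=0$ counterparts off $[a,b]$, so $W(\mu,\alpha)$ is obtained from the $\alpha$-independent boundary data of $y_{-}$ at $a$ and $y_{+}$ at $b$ by transport across $[a,b]$ with the transfer matrix of $-u''+(V+\alpha q-\mu)u=0$, whose leading $\mu\to-\infty$ behaviour is $\alpha$-independent by a Liouville--Green (WKB) estimate; hence $W(\mu,\alpha)/W(\mu,0)\to1$. Since each factor tends to $\lambda_{k}(0)/\lambda_{k}(\alpha)$ and the convergence is uniform on $(-\infty,0]$, the product tends to $\prod_{k}\lambda_{k}(0)/\lambda_{k}(\alpha)\in(0,\infty)$; so $e^{R(\mu)}$ has a finite nonzero limit, the polynomial $R$ is bounded on a ray, and therefore $R\equiv R(0)=0$. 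Letting $\mu\to-\infty$ in the last display then gives $1=\frac{W(0,\alpha)}{W(0,0)}\prod_{k}\lambda_{k}(0)/\lambda_{k}(\alpha)$, i.e.\ $W(\alpha)/W(0)=\prod_{k}\lambda_{k}(\alpha)/\lambda_{k}(0)$, and combining with the first step yields $\det(T_{\alpha})=\frac{W(\alpha)}{W(0)}\det(T_{0})$. I expect the main obstacle to be this last step: unlike on a bounded interval, the $y_{\pm}$ are fixed not by initial data but by an $L^{2}$ condition at infinity, and they (hence $W(\mu,\alpha)$) must be controlled uniformly as the spectral parameter runs to $-\infty$; the compact support of $q$ is precisely what makes this tractable, reducing the entire $\alpha$-dependence of $W(\mu,\alpha)$ to that of a fixed finite-interval transfer matrix. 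A secondary technical point is verifying, for the class of potentials at hand, that $W(\cdot,\alpha)$ is indeed entire of finite order with the asserted zeros.
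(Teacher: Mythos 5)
Your proposal reaches the correct identity by a genuinely different route from the paper. Both arguments share the first (zeta-theoretic) step, reducing the claim to $W(\alpha)/W(0)=\prod_{k}\lambda_{k}(\alpha)/\lambda_{k}(0)$; the paper then proves this product identity by differentiating in the coupling constant $\alpha$ at fixed spectral parameter $0$: Lemma~\ref{lem:dW=W int} shows $\partial_{\alpha}\log W(\alpha)=\int q(x)G(x,x)\,\mathrm{d}x$ via an integration by parts exploiting that $y_{\pm}$ are $\alpha$-independent off $[a,b]$, and the Hellmann--Feynman theorem together with the eigenfunction expansion $G(x,x)=\sum_{j}|u_{j}(x)|^{2}/\lambda_{j}$ shows that $\partial_{\alpha}\log f(\alpha)$ equals the same quantity. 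You instead fix $\alpha$ and vary the spectral parameter $\mu$, treating $W(\mu,\alpha)$ as an entire characteristic function, taking Hadamard quotients and killing the exponential polynomial by a $\mu\to-\infty$ limit. Your route buys independence from perturbation theory in $\alpha$ (no Hellmann--Feynman, no differentiability of eigenfunctions, no interchange of $\sum$ and $\int$ in the Green's function expansion), but at the price of substantially heavier global-in-$\mu$ analysis: you must show that the recessive solutions can be chosen entire in $\mu$, that $W(\cdot,\alpha)$ has finite order with zeros of the correct multiplicity, and that $W(\mu,\alpha)/W(\mu,0)\to1$ along a ray. The last point you reduce correctly to a finite-interval transfer-matrix estimate (and there is no cancellation danger there, since for $\mu\to-\infty$ both terms of the Wronskian at $x=b$ carry the same sign), but the entirety and finite-order claims are more than the ``secondary technical point'' you call them: they do not follow from the theorem's abstract hypotheses \eqref{asymptotic} and \eqref{spectr_cond} alone and would need to be verified for the class of potentials $V$ at hand (they do hold for $V=|x|^{\beta}$). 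The paper's argument deliberately sidesteps this by never leaving $\mu=0$. Two small additional points: simplicity of the zeros of $W(\cdot,\alpha)$ follows not from \eqref{asymptotic} but from the standard identity $\partial_{\mu}W(\lambda_{k},\alpha)=\pm\int y_{-}y_{+}\neq0$; and the limit $\mu\to-\infty$ must be taken below the (finitely many) possible negative eigenvalues so that no factor $1-\mu/\lambda_{k}$ vanishes on the ray.
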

\begin{remark}
 It is possible to consider more general operators with differential part given by $-\frac{\mathrm{d}}{\mathrm{d}x}\left(p
\frac{\mathrm{d}}{\mathrm{d}x}\right)$, for suitable (positive) functions $p$ for which the given conditions are still satisfied.
\end{remark}

\begin{remark}
 The condition that $\det(T_{0})$ does not vanish has to be imposed, as in that case the two solutions $y_{\pm}$ are, in
 fact, linearly dependent, and zero is an eigenvalue of $T_{0}$.
\end{remark}

We note that the conditions indicated include important families of potentials $V$ and $q$, such as bounded perturbations with compact support of
anharmonic operators, for which we can ensure that the conditions in Theorem~\ref{thm:maingen} hold.
\begin{corollary}\label{cor:fedosova}
 Let $V$ be the anharmonic potential defined on the real line by $V(x)= \left|x\right|^{\beta}$, for some positive real number $\beta$, and $q$ be
 a compactly supported piecewise H\"{o}lder continuous function with positive constant. Then the potential $V + \alpha q$ satisfies the conditions
 in Theorem~\ref{thm:maingen}.
\end{corollary}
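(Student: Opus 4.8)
The plan is to verify, in turn, the requirements of Theorem~\ref{thm:maingen}: that $q$ is bounded with compact support, that $T_\alpha$ is bounded below with a discrete spectrum obeying the power law~\eqref{asymptotic}, and that the eigenvalue ratios satisfy~\eqref{spectr_cond}. The first is immediate, since $q$ is assumed compactly supported and piecewise H\"older continuous, hence bounded. For the second, I would begin from the fact that $V(x)=|x|^\beta\to+\infty$ as $|x|\to\infty$: this makes $T_\alpha$ self-adjoint and bounded below with empty essential spectrum, so its spectrum is purely discrete, and the bounded, compactly supported term $\alpha q$ affects none of this. The same unbounded growth of $V$ places $T_\alpha$ in the limit-point case at both $+\infty$ and $-\infty$, which on the whole line forces every eigenvalue to be simple, giving the strict inequalities $\lambda_1<\lambda_2<\cdots$ in~\eqref{asymptotic}.

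It then remains to pin down the growth rate and the ratio asymptotics, and this is where~\cite{FN21} enters. The leading Weyl asymptotics for $-d^2/dx^2+|x|^\beta$ give $\lambda_k(0)=c\,k^{\tau}+\so(k^\tau)$ with $\tau=2\beta/(\beta+2)>0$ and $c=c(\beta)$ an explicit constant; adding the bounded compactly supported $\alpha q$ changes neither $c$ nor $\tau$, so~\eqref{asymptotic} holds for every real $\alpha$, with $c$ and $\tau$ independent of $\alpha$ as claimed after that equation. The finer analysis of~\cite{FN21} --- whose hypotheses on the perturbation are exactly the compact support and piecewise H\"older regularity imposed in the corollary --- additionally yields a bound $\lambda_k(\alpha)-\lambda_k(0)=\mathcal{O}(k^{-\gamma})$ for a suitable $\gamma>0$. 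Heuristically, first-order perturbation theory bounds this difference by $|\alpha|\,\|q\|_\infty$ times the $L^{2}$-mass of the normalized $k$-th eigenfunction on the support of $q$, and a semiclassical estimate (the classically allowed interval is $[-\lambda_k^{1/\beta},\lambda_k^{1/\beta}]$, which spreads out as $k\to\infty$) shows this mass decays like $k^{-\tau/\beta}$; taking $\gamma=\tau/\beta$ one gets $\gamma+\tau>1$ precisely when $\beta>0$, so dividing by $\lambda_k(0)\sim c\,k^\tau$ gives $\lambda_k(\alpha)/\lambda_k(0)=1+\mathcal{O}(k^{-1-\varepsilon})$ with $\varepsilon=\beta/(\beta+2)>0$. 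This is~\eqref{spectr_cond}, and the corollary then follows from Theorem~\ref{thm:maingen}.

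The substance of the argument is concentrated in~\eqref{spectr_cond}: discreteness, simplicity of the spectrum and the leading Weyl law are classical, but the quantitative control of $\lambda_k(\alpha)-\lambda_k(0)$ is not --- note in particular that the trivial bound $|\lambda_k(\alpha)-\lambda_k(0)|\le|\alpha|\,\|q\|_\infty$ gives only $\lambda_k(\alpha)/\lambda_k(0)=1+\mathcal{O}(k^{-\tau})$, which fails to meet~\eqref{spectr_cond} for $\beta\le 2$, so the eigenfunction mass-decay input from~\cite{FN21} is genuinely needed. The one remaining point requiring attention is to confirm that the regularity and support hypotheses placed on $q$ in the statement of the corollary are exactly those under which the expansions of~\cite{FN21} are valid, and that the resulting $\varepsilon$ is strictly positive for all $\beta>0$; the computation above shows that it is.
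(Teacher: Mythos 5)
Your proposal is correct and follows essentially the same route as the paper: the paper's proof simply cites the two-term asymptotics of \cite{FN21}, $\lambda_{n} = c_{1}(2n-1)^{2\beta/(\beta+2)} + c_{2}(2n-1)^{-2/(\beta+2)} + \so\bigl(n^{-2/(\beta+2)}\bigr)$ with $c_{1}$ independent of $q$, which is exactly the quantitative input you identify (your $\gamma=2/(\beta+2)$ is the exponent of the $q$-dependent second term), and both arguments land on the same $\varepsilon=\beta/(\beta+2)>0$. Your additional verification of discreteness, simplicity, and the Weyl exponent $\tau=2\beta/(\beta+2)$, and the semiclassical heuristic for the eigenfunction mass decay, are correct elaborations of steps the paper leaves implicit.
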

\begin{proof}
 This follows from the results in~\cite{FN21} and, in particular, from the asymptotics
 \[
 \lambda_{n} = c_{1} (2n-1)^{2\beta/(\beta+2)} + c_{2} (2n-1)^{-2/(\beta+2)} + \so\left({n^{-2/(\beta+2)}}\right),
 \]
 as $n$ goes to infinity, where $c_{1}$ does not depend on $q$.
\end{proof}
\noindent This case will be considered in more detail in Section~\ref{sec:anharmonic} where, in particular, we will make the solutions $y_{\pm}$ explicit, with the examples provided in the last section then showing that it is possible to obtain explicit formul\ae\ for the determinants of this type of operators.

\section{Proof of Theorem~\ref{thm:maingen}\label{sec:proofmain}}

In this section we will prove Theorem~\ref{thm:maingen} following the arguments of~\cite{LS77}, now adapted to the setting of the whole real line. We start with a lemma on the relation between the Wronskian and the Green's function. 

\begin{lemma}\label{lem:dW=W int}
Let $y_\pm$ be the unique (modulus multiplication by a nonzero constant) nontrivial solutions of $T_\alpha y =0$ in
$L^{2}\left(\mathbb{R}_\pm\right)$, respectively. Let $W(\alpha) = y_-'(x) y_+(x)- y_-(x) y_+'(x)$ be the Wronskian of the solutions $y_-$ and $y_+$. Let $G(x,s)$ be the Green's function for the operator $T_\alpha$. Under conditions \eqref{asymptotic} and \eqref{spectr_cond} we have 
$$
  \frac{1}{W(\alpha)}\frac{\partial W(\alpha)}{\partial \alpha} = \int_{-\infty}^\infty q(x) G(x,x) \,\mathrm{d}x
$$
\end{lemma}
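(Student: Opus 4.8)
The plan is to differentiate the Wronskian identity $W(\alpha) = y_-'(x)y_+(x) - y_-(x)y_+'(x)$ with respect to $\alpha$, using the fact that this expression is independent of $x$ (since $y_\pm$ solve $T_\alpha y = 0$). Write $\dot y_\pm = \partial y_\pm/\partial\alpha$. Differentiating $T_\alpha y_\pm = 0$ in $\alpha$ gives the inhomogeneous equations $T_\alpha \dot y_\pm = -q\, y_\pm$, so $\dot y_\pm$ can be represented by variation of parameters in terms of $y_-$, $y_+$ themselves, with the source $-q\,y_\pm$. The key point is that $\dot y_\pm$ must inherit the correct decay at $\pm\infty$: since the normalization of $y_\pm$ is chosen $\alpha$-independent outside $\mathrm{supp}(q)$, we in fact have $\dot y_\pm \equiv 0$ on the appropriate half-lines beyond $[a,b]$, which pins down the otherwise-free homogeneous part of the variation-of-parameters formula.

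First I would fix a point $x$ and compute $\partial W/\partial\alpha = \dot y_-'(x) y_+(x) + y_-'(x)\dot y_+(x) - \dot y_-(x) y_+'(x) - y_-(x)\dot y_+'(x)$. Then I would substitute the variation-of-parameters expressions: with $W = W(\alpha)$ the constant Wronskian,
\[
 \dot y_+(x) = \frac{1}{W}\left[ y_+(x)\int_{-\infty}^{x} y_-(s)\,q(s)\,y_+(s)\,\mathrm{d}s - y_-(x)\int_{?}^{x} y_+(s)\,q(s)\,y_+(s)\,\mathrm{d}s\right],
\]
and the mirror expression for $\dot y_-$, where the limits of integration are dictated by the requirement that $\dot y_+$ vanish for $x$ to the left of $\mathrm{supp}(q)$ and decay (in $L^2$) as $x\to+\infty$, and symmetrically for $\dot y_-$. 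Recall that the diagonal Green's function is $G(x,x) = y_-(x)y_+(x)/W$ (with the sign/ordering convention from the construction of $G$ earlier in the paper). The strategy is then to insert these formulas into $\partial W/\partial\alpha$, differentiate the integrals where needed (the boundary terms from $\frac{d}{dx}\int^x$ produce exactly the combinations that telescope against the $y_\pm' \dot y_\pm$ terms), and watch the bulk terms collapse: the coefficient of $q(x)$ in the integrand should reduce to $y_-(x)y_+(x)/W = G(x,x)$, while all boundary contributions cancel because the homogeneous parts were chosen to kill them. Dividing by $W(\alpha)$ then yields the claimed identity $\frac{1}{W}\frac{\partial W}{\partial\alpha} = \int_{-\infty}^\infty q(x)G(x,x)\,\mathrm{d}x$, with the integral effectively over $[a,b]$ since $q$ has compact support.

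The main obstacle I anticipate is bookkeeping the boundary/decay conditions correctly: one must verify that the variation-of-parameters solution with the stated limits of integration genuinely produces the $\alpha$-derivative of $y_\pm$ — i.e.\ that it is the unique solution of $T_\alpha \dot y_\pm = -q y_\pm$ with the right asymptotics — and this is where the compact support of $q$ and the $\alpha$-independence of the normalization of $y_\pm$ outside $[a,b]$ are essential. A secondary technical point is justifying that $y_\pm$ and hence $W$ are differentiable in $\alpha$ in the first place, and that all the integrals converge; this should follow from standard smooth dependence of solutions of linear ODEs on parameters together with the exponential-type decay of $y_\pm$ guaranteed by the limit-point case. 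Conditions \eqref{asymptotic} and \eqref{spectr_cond} enter only indirectly here — they are what make $W(\alpha)$ the relevant object (via its identification with the spectral determinant up to the $\alpha$-independent factor), so in this lemma they can be invoked as already securing the well-definedness of everything in sight.
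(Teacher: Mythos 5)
Your overall strategy is the one the paper actually follows: differentiate in $\alpha$, note that $\dot y_\pm=\partial_\alpha y_\pm$ solve $T_\alpha\dot y_\pm=-q\,y_\pm$, use the $\alpha$-independence of $y_\pm$ outside $\mathrm{supp}(q)$ to pin down $\dot y_\pm$, arrive at $\partial_\alpha W=\int_a^b q\,y_-y_+\,\mathrm{d}x$, and finish with $G(x,x)=y_-y_+/W$. The paper merely packages the middle step differently: instead of writing $\dot y_\pm$ explicitly by variation of parameters, it integrates the Lagrange identity for the pairs $(y_-,\dot y_+)$ on $[c,b]$ and $(y_+,\dot y_-)$ on $[a,c]$, so the boundary terms cancel automatically and the surviving terms at $c$ assemble into $\partial_\alpha W$.

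There is, however, a concrete error at exactly the point you flag as the main obstacle: the side on which $\dot y_+$ vanishes. Since $y_+$ is normalised $\alpha$-independently and the equation on $(b,\infty)$ contains no $\alpha$, the restriction of $y_+$ to $(b,\infty)$ does not depend on $\alpha$; hence $\dot y_+\equiv 0$ to the \emph{right} of $\mathrm{supp}(q)$, giving zero Cauchy data $\dot y_+(b)=\dot y_+'(b)=0$ (and symmetrically $\dot y_-(a)=\dot y_-'(a)=0$). Your requirement that ``$\dot y_+$ vanish for $x$ to the left of $\mathrm{supp}(q)$ and decay in $L^2$ as $x\to+\infty$'' is backwards and, taken literally, overdetermines the inhomogeneous problem: to the left of $[a,b]$ the function $\dot y_+$ is in general a nonzero solution of the homogeneous equation with no decay requirement at $-\infty$. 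With the conditions as you state them the limits in your variation-of-parameters formula come out wrong and the boundary terms do not cancel. The corrected representation is
\begin{equation}
 \dot y_+(x)=\frac{1}{W}\left[\,y_+(x)\int_x^b y_-(s)q(s)y_+(s)\,\mathrm{d}s-y_-(x)\int_x^b y_+(s)q(s)y_+(s)\,\mathrm{d}s\right],
\end{equation}
which indeed satisfies $T_\alpha\dot y_+=-q\,y_+$ with vanishing Cauchy data at $b$; then $y_-'\dot y_+-y_-\dot y_+'=\int_x^b q\,y_-y_+\,\mathrm{d}s$, the mirror term gives $\int_a^x q\,y_-y_+\,\mathrm{d}s$, and their sum is the $x$-independent quantity $\partial_\alpha W=\int_a^b q\,y_-y_+\,\mathrm{d}x$, after which your conclusion via $G(x,x)=y_-y_+/W$ is correct. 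Your closing remarks on differentiability in $\alpha$ and on the role of \eqref{asymptotic} and \eqref{spectr_cond} are in line with the paper.
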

\begin{proof}
The functions $y_-$ and $y_+$ satisfy the equations
\begin{equation}
  -y_-''(x)+ V(x) y_-(x)+\alpha q(x) y_-(x) = 0\,\label{eq:y1}
\end{equation}
and
\begin{equation}
  -y_+''(x)+ V(x) y_+(x)+\alpha q(x) y_+(x) = 0\,.\label{eq:y2}  
\end{equation}
On $[a,b]$ we define their $\alpha$-derivatives $z_-(x) = \frac{\partial y_-}{\partial \alpha}(x)$ and $z_+(x) =
\frac{\partial y_+}{\partial \alpha}(x)$. Since $y_\pm$ are $\alpha$-independent on $(b,\infty)$ and $(-\infty,a)$,
respectively, one can find by differentiating \eqref{eq:y1} and \eqref{eq:y2} that they satisfy the equations
\begin{align}
  -z_-''(x)+ V(x) z_-(x)+\alpha q(x) z_-(x) = -q(x)y_-(x)\,,\label{eq:z1}\eqskip
  -z_+''(x)+ V(x) z_+(x)+\alpha q(x) z_+(x) = -q(x)y_+(x)\,,\label{eq:z2}  
\end{align}
subject to the initial conditions
\begin{equation}
  z_-(a) = z_-'(a) = 0\,,\quad z_+(b) = z_+'(b) = 0\,.\label{eq:bcz}
\end{equation}
Let $c$ be any point in $[a,b]$. Now we multiply \eqref{eq:y1} by $z_+$, subtract \eqref{eq:z2} multiplied by $y_-$ and we integrate the whole expression from $c$ to $b$. We add to this integral the integral from $a$ to $c$ from \eqref{eq:y2} multiplied by $z_-$ minus \eqref{eq:z1} multiplied by $y_+$. We obtain
\begin{align*}
  \int_c^b (-z_+(x)y_-''(x)+y_-(x)z_+''(x))\,\mathrm{d}x+ \int_a^c (- z_-(x)y_+''(x)+y_+(x)z_-''(x))\,\mathrm{d}x =\eqskip
= \int_a^b q(x)y_-(x)y_+(x)\,\mathrm{d}x
\end{align*}
Integrating by parts we get
\begin{align}
 -z_+(b)y_-'(b)+y_-(b)z_+'(b)+z_-(a)y_+'(a)-y_+(a)z_-'(a)-z_-(c)y_+'(c)\nonumber\eqskip
+y_+(c)z_-'(c)+z_+(c)y_-'(c)-y_-(c)z_+'(c) = \int_a^b q(x)y_-(x)y_+(x)\,\mathrm{d}x\,. \label{eq:afterintbyparts}
\end{align}
The first four terms on the \emph{lhs} of \eqref{eq:afterintbyparts} vanish due to \eqref{eq:bcz} and we obtain the $\alpha$-derivative of the Wronskian on the \emph{lhs}.
\begin{equation}
\left.\frac{\partial}{\partial \alpha} (y_+y_-'-y_-y_+')\right|_c = \int_a^b q(x)y_-(x)y_+(x)\,\mathrm{d}x\,.\label{eq:derwronskian}
\end{equation}
If $y_{\pm}$ are linearly independent, then the Green's function of a Hamiltonian $T_{\alpha}$ satisfying condition~\eqref{asymptotic} is given by
$$
  G(x,s) = \left\{\begin{matrix}\fr{y_-(x)y_+(s)}{W(\alpha)}\,,& \mathrm{for}\quad x<s\,,\\  \fr{y_-(s)y_+(x)}{W(\alpha)}\,,& \mathrm{for}\quad s<x\,.\end{matrix}\right.
$$
(see, e.g., \cite[eq. (2.18.4)]{Tit62}). In particular, we have
\begin{equation}
  G(x,x) = \fr{y_-(x) y_+(x)}{y_-'(x) y_+(x)- y_-(x) y_+'(x)}\,. \label{eq:greenxx}
\end{equation}
Using \eqref{eq:derwronskian}, \eqref{eq:greenxx} and the fact that $\mathrm{supp}\,q\subset [a,b]$, the result follows.
\end{proof}

Let us define 
\begin{equation}
  f(\alpha) = \prod_{j=1}^\infty \frac{\lambda_j(\alpha)}{\lambda_j(0)}\,,\label{eq:prod}
\end{equation}
where $\lambda_j(\alpha)$ is the $j$-th eigenvalue of the operator $T_\alpha$. 
Condition~\eqref{spectr_cond} implies that the product above is uniformly convergent and well-defined. Differentiating with respect to $\alpha$ yields
\begin{equation}
  \frac{1}{f(\alpha)}\frac{\mathrm{d} f(\alpha)}{\mathrm{d} \alpha} = \sum_{j=1}^\infty \frac{1}{\lambda_j(\alpha)}\frac{\mathrm{d}\lambda_j(\alpha)}{\mathrm{d}\alpha}\,.\label{eq:f}
\end{equation}

Let $u_j(x,\alpha)$ be a $j$-th eigenfunction of $T_\alpha$, normalised to have unit $L^{2}(\mathbb{R})$ norm, i.e.
\begin{align}
  T_\alpha u_j(x,\alpha) = \lambda_j(\alpha) u_j(x,\alpha)\,,\label{eq:eigeq}\eqskip
  \int_{-\infty}^\infty |u_j(x,\alpha)|^2\,\mathrm{d}x = 1\,.\label{eq:normu} 
\end{align}

The aim of the following lemma is to relate the function $f(\alpha)$ to the Wronskian.

\begin{lemma}
There exists an $\alpha$-independent constant $C$ such that $C\, f(\alpha) =W(\alpha)$.
\end{lemma}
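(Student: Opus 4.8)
The plan is to show that the logarithmic $\alpha$-derivatives of $f(\alpha)$ and of $W(\alpha)$ coincide, which by Lemma~\ref{lem:dW=W int} reduces to proving
\[
 \sum_{j=1}^\infty \frac{1}{\lambda_j(\alpha)}\frac{\mathrm{d}\lambda_j(\alpha)}{\mathrm{d}\alpha} = \int_{-\infty}^\infty q(x)\,G(x,x)\,\mathrm{d}x.
\]
First I would compute $\mathrm{d}\lambda_j/\mathrm{d}\alpha$ by first-order perturbation theory (Hellmann--Feynman): differentiating \eqref{eq:eigeq}, pairing with $u_j$, and using \eqref{eq:normu} together with the self-adjointness of $T_\alpha$ gives $\mathrm{d}\lambda_j/\mathrm{d}\alpha = \int_{-\infty}^\infty q(x)\,|u_j(x,\alpha)|^2\,\mathrm{d}x$, where the boundary terms at $\pm\infty$ vanish because $u_j$ and its derivative decay (limit-point case). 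Substituting into \eqref{eq:f} yields
\[
 \frac{1}{f(\alpha)}\frac{\mathrm{d} f(\alpha)}{\mathrm{d}\alpha} = \sum_{j=1}^\infty \frac{1}{\lambda_j(\alpha)}\int_{-\infty}^\infty q(x)\,|u_j(x,\alpha)|^2\,\mathrm{d}x.
\]

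The next step is to recognise the sum on the right as a spectral expansion of the diagonal of the Green's function. Since $G(x,s)$ is the kernel of $T_\alpha^{-1}$, one has the formal identity $G(x,s) = \sum_{j=1}^\infty \lambda_j(\alpha)^{-1} u_j(x,\alpha)\overline{u_j(s,\alpha)}$; evaluating at $s=x$, multiplying by $q(x)$, and integrating over $\mathbb{R}$, then interchanging sum and integral, gives exactly $\int q(x)\,G(x,x)\,\mathrm{d}x$. Combined with the previous display and Lemma~\ref{lem:dW=W int}, this shows $\frac{\mathrm{d}}{\mathrm{d}\alpha}\log f(\alpha) = \frac{\mathrm{d}}{\mathrm{d}\alpha}\log W(\alpha)$ on any $\alpha$-interval where $W$ (equivalently $f$, equivalently $\lambda_1$) does not vanish, hence $W(\alpha) = C f(\alpha)$ there for a locally constant $C$; continuity of both sides in $\alpha$ and the fact that a vanishing of $W$ corresponds to $0$ being an eigenvalue (a discrete set of $\alpha$'s) then promotes this to a single global constant $C$.

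The main obstacle is justifying the interchange of summation and integration in the Green's-function identity, and more fundamentally justifying that $\sum_j \lambda_j^{-1} u_j(x)\overline{u_j(x)}$ converges to $G(x,x)$ pointwise (or in a sense strong enough to integrate against $q$). This is where the hypotheses are needed: compact support of $q$ confines everything to the fixed interval $[a,b]$, on which the eigenfunctions $u_j$ are uniformly bounded and, by \eqref{asymptotic}, $\lambda_j^{-1}$ is summable against the growth of $\|u_j\|_{L^\infty[a,b]}^2$ — bounds of the type $|u_j(x)| = \mathcal{O}(\lambda_j^{\kappa})$ for small $\kappa$ on compacts follow from standard Sturm--Liouville/WKB estimates — so that the series defining $G(x,x)$ converges uniformly on $[a,b]$ and the interchange is legitimate by dominated convergence. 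A secondary technical point is justifying term-by-term differentiation of the infinite product \eqref{eq:prod} and of the series \eqref{eq:f}, which again follows from the uniform convergence guaranteed by \eqref{spectr_cond}; and one must also check that $\mathrm{d}\lambda_j/\mathrm{d}\alpha$ and $u_j$ depend smoothly enough on $\alpha$, which is standard analytic perturbation theory for a discrete simple spectrum (the eigenvalues being simple by \eqref{asymptotic}).
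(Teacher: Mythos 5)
Your proposal is correct and follows essentially the same route as the paper: Hellmann--Feynman for $\mathrm{d}\lambda_j/\mathrm{d}\alpha$, the eigenfunction expansion $G(x,s)=\sum_j \lambda_j^{-1}\bar{u}_j(x)u_j(s)$, exchange of sum and integral justified by absolute convergence, and then Lemma~\ref{lem:dW=W int} to equate the logarithmic derivatives of $f$ and $W$. Your additional remarks on the local-versus-global constant near zeros of $W$ and on the uniform convergence of the Green's function series on $[a,b]$ are finer points that the paper's proof passes over silently.
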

\begin{proof}
We rewrite the eq.~\eqref{eq:f} as
$$
  \frac{1}{f(\alpha)}\frac{\mathrm{d} f(\alpha)}{\mathrm{d} \alpha} = \int_{-\infty}^\infty \sum_{j=1}^\infty  \frac{|u_j(x,\alpha)|^2}{\lambda_j(\alpha)} q(x)\,\mathrm{d}x = \int_{-\infty}^\infty G(x,x) q(x)\,\mathrm{d}x = \frac{1}{W(\alpha)}\frac{\mathrm{d} W(\alpha)}{\mathrm{d} \alpha}\,,
$$
where we have used the first Hellmann-Feynman theorem (first proven by G{\"u}ttinger \cite{Gut32})
$$
  \frac{\mathrm{d}\lambda_j(\alpha)}{\mathrm{d}\alpha} = \int_{-\infty}^{+\infty} |u_j(x,\alpha)|^2 q(x)\,\mathrm{d}x  = \int_a^b |u_j(x,\alpha)|^2 q(x)\,\mathrm{d}x\,,
$$
the formula
\begin{equation}
  G(x,s) = \sum_{j=1}^\infty  \frac{\bar{u}_j(x,\alpha)u_j(s,\alpha)}{\lambda_j(\alpha)}\,, \label{eq:green2}
\end{equation}
Lemma~\ref{lem:dW=W int} and the fact that the sum in \eqref{eq:f} is absolutely convergent and hence we can exchange the sum and the integral. Hence
\begin{equation}
  \frac{\mathrm{d}}{\mathrm{d}\alpha}\mathrm{log\,}(f(\alpha)) = \frac{\mathrm{d}}{\mathrm{d}\alpha}\mathrm{log\,}(W(\alpha)) \label{eq:log}
\end{equation}
from which the claim follows. 
\end{proof}

From this lemma, the main theorem follows.

\section{The anharmonic oscillator\label{sec:anharmonic}}

Let us now consider a special case of the operator $T_\alpha$ for the potential $V(x) = |x|^\beta$, the operator $\hamab = -
\frac{\mathrm{d}^2}{\mathrm{d}x^2}+|x|^\beta+\alpha q(x)$ on $\mathbb{R}$ with $q$ being a real-valued, bounded,
compactly supported H\"older continuous function with the exponent larger than 0, $\mathrm{supp\,}q \subset [a,b]$ with $-\infty<a\leq 0\leq b<\infty$
and $\beta>0$. Due to Corollary~\ref{cor:fedosova} it satisfies the condition in Theorem~\ref{thm:maingen}.

Let $\beta$ be fixed and $ y_-(x)$ and $ y_+(x)$ be the solutions of the equation $\hamab y = 0$ on $\mathbb{R}$ that behave as
\begin{equation}
   y_-(x) = \sqrt{-x} K_{\frac{1}{2+\beta}}\left(\frac{2}{2+\beta}(-x)^{1+\frac{\beta}{2}}\right)\quad \mathrm{for}\ x<a \label{eq:tildey1}
\end{equation}
\begin{equation}
   y_+(x) = \sqrt{x} K_{\frac{1}{2+\beta}}\left(\frac{2}{2+\beta}x^{1+\frac{\beta}{2}}\right)\quad \mathrm{for}\ x>b \label{eq:tildey2}
\end{equation}
respectively. Here $K_{\nu}(z)$ is the Bessel $K$ function. We do not denote the dependence of the functions $ y_-$, $ y_+$ on $\alpha$
and $\beta$ explicitly. The fact that the above functions are solutions to the equation $\hamab y = 0$ can be verified by a straightforward computation
using~\cite[\S 17.7 and \S 17.71]{WW96}. Using the formul\ae\ $K_{-\nu}(z) = K_{\nu}(z)$ and $\frac{\partial}{\partial z}(z^\nu K_\nu(z)) = -z^\nu K_{\nu-1}(z)$ (see,
e.g.,~\cite[51:5:1 and 51:10:4]{SO87}) one can straightforwardly find that the derivatives of~\eqref{eq:tildey1} and~\eqref{eq:tildey2} are
\begin{align*}
   y_-'(x) &= (-x)^{\frac{\beta+1}{2}} K_{\frac{\beta+1}{\beta+2}}\left(\frac{2}{2+\beta}(-x)^{1+\frac{\beta}{2}}\right)\quad \mathrm{for}\ x<a\eqskip
   y_+'(x) &=   -x^{\frac{\beta+1}{2}} K_{\frac{\beta+1}{\beta+2}}\left(\frac{2}{2+\beta}x^{1+\frac{\beta}{2}}\right)\quad \mathrm{for}\ x>b 
\end{align*}
The above-mentioned functions fulfill the conditions
\begin{align}
   y_-(a) &= \sqrt{-a}\, K_{\frac{1}{2+\beta}}\left(\frac{2}{2+\beta}(-a)^{1+\frac{\beta}{2}}\right) \,,\label{eq:bc1}\eqskip
   y_-'(a) &=  (-a)^{\frac{\beta+1}{2}} K_{\frac{\beta+1}{\beta+2}}\left(\frac{2}{2+\beta}(-a)^{1+\frac{\beta}{2}}\right)\,,\label{eq:bc2}\eqskip
   y_+(b)  &= \sqrt{b}\, K_{\frac{1}{2+\beta}}\left(\frac{2}{2+\beta}b^{1+\frac{\beta}{2}}\right)\,,\label{eq:bc3}\eqskip
   y_+'(b) &=	  -b^{\frac{\beta+1}{2}} K_{\frac{\beta+1}{\beta+2}}\left(\frac{2}{2+\beta}b^{1+\frac{\beta}{2}}\right)\,.\label{eq:bc4}
\end{align}

We can rewrite Theorem~\ref{thm:maingen} into the setting that uses only the boundary value problem on a finite interval.

\begin{theorem}\label{thm:main}
The spectral determinant for the operator $\hamab$ on $\mathbb{R}$ can be expressed as 
$$
  \mathrm{det}\,\hamab = \frac{W(\alpha)}{W(0)} \mathrm{det}\,\ham_{0,\beta}\,,
$$
where $W(\alpha)$ is the Wronskian of the solution $y_-$ to the Cauchy problem $\hamab y_{-} = 0$ with the initial conditions~\eqref{eq:bc1}
and~\eqref{eq:bc2} and the solution $y_+$ to the Cauchy problem $\hamab y_+ = 0$ with the initial conditions~\eqref{eq:bc3} and~\eqref{eq:bc4},
similarly, $W(0)$ is the above Wronskian for the parameter $\alpha = 0$ and $\ham_{0,\beta}$ is the Hamiltonian for the parameter $\alpha =0$.
\end{theorem}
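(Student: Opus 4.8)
The plan is to obtain Theorem~\ref{thm:main} as a direct specialization of Theorem~\ref{thm:maingen}, the only real work being the explicit identification of the $L^{2}\left(\R^{\pm}\right)$ solutions and the reduction of the Wronskian to a computation on the finite interval $[a,b]$. By Corollary~\ref{cor:fedosova}, the potential $|x|^{\beta}+\alpha q$ satisfies conditions~\eqref{asymptotic} and~\eqref{spectr_cond}, so Theorem~\ref{thm:maingen} applies and (assuming, as in Theorem~\ref{thm:maingen}, that $\det\ham_{0,\beta}\neq 0$) yields
\[
 \det\hamab = \fr{W(\alpha)}{W(0)}\,\det\ham_{0,\beta},
\]
where $W(\alpha)=y_-'y_+-y_-y_+'$ is the Wronskian of the nontrivial solutions $y_{\pm}$ of $\hamab y=0$ lying in $L^{2}\left(\R^{\pm}\right)$, unique up to multiplication by a nonzero constant that we take $\alpha$-independent.

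First I would pin down $y_{+}$ on $x>b$, where $q\equiv 0$ and the equation reduces to $-y''+x^{\beta}y=0$. The substitution $y=\sqrt{x}\,w(\xi)$ with $\xi=\frac{2}{2+\beta}x^{1+\frac{\beta}{2}}$, the one already used to check that~\eqref{eq:tildey1} and~\eqref{eq:tildey2} solve $\hamab y=0$, turns this into the modified Bessel equation of order $\nu=\frac{1}{2+\beta}$, with linearly independent solutions $I_{\nu}(\xi)$ and $K_{\nu}(\xi)$. Since $K_{\nu}(\xi)\sim\sqrt{\pi/(2\xi)}\,e^{-\xi}$ as $\xi\to\infty$, the function $\sqrt{x}\,K_{\nu}(\xi)$, namely~\eqref{eq:tildey2}, decays like $x^{-\beta/4}e^{-\xi}$ and lies in $L^{2}(b,\infty)$, whereas $\sqrt{x}\,I_{\nu}(\xi)$ grows and does not. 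Hence $y_{+}$ restricted to $(b,\infty)$ is a constant multiple of~\eqref{eq:tildey2}, and fixing this ($\alpha$-independent) constant to be $1$ pins down the normalization; the symmetric argument on $x<a$ identifies $y_{-}$ with~\eqref{eq:tildey1}. Since $\mathrm{supp\,}q\subset[a,b]$, the function $y_{+}$ in fact equals~\eqref{eq:tildey2} on all of $(b,\infty)$ for every $\alpha$, so on $[a,b]$ it is exactly the solution of the Cauchy problem $\hamab y_{+}=0$ with initial data $y_{+}(b)$ and $y_{+}'(b)$ obtained by evaluating~\eqref{eq:tildey2} and its derivative at $b$, namely~\eqref{eq:bc3} and~\eqref{eq:bc4}; likewise $y_{-}$ solves $\hamab y_{-}=0$ on $[a,b]$ with data~\eqref{eq:bc1} and~\eqref{eq:bc2} at $a$.

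Finally, since $\hamab y=0$ has no first-order term, the Wronskian of the two global solutions $y_{\pm}$ is constant in $x$; it can therefore be evaluated at any point of $[a,b]$, and hence computed from the two Cauchy problems of the statement, for instance at $x=a$ from~\eqref{eq:bc1} and~\eqref{eq:bc2} for $y_{-}$ together with the values at $a$ obtained by integrating the Cauchy problem for $y_{+}$ backwards from $b$. Repeating this at $\alpha=0$ gives $W(0)$ with the identical normalization, so the constant cancels in the ratio $W(\alpha)/W(0)$ and the formula of Theorem~\ref{thm:maingen} becomes the assertion of Theorem~\ref{thm:main}.

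I do not anticipate a genuine obstacle, since the statement is essentially a concrete rewriting of Theorem~\ref{thm:maingen}. The two points needing care are the verification that~\eqref{eq:tildey1} and~\eqref{eq:tildey2} are, up to scalars, the $L^{2}$ solutions of $\hamab y=0$ at $-\infty$ and $+\infty$ respectively, which rests on the contrast between the exponential decay of $K_{\nu}$ and the growth of $I_{\nu}$, and keeping the normalizations of $y_{\pm}$ in the $\alpha$ and $\alpha=0$ problems identical so that the scalar drops out of the quotient.
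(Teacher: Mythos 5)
Your proposal is correct and follows essentially the same route as the paper, which presents Theorem~\ref{thm:main} as a direct specialization of Theorem~\ref{thm:maingen} once the $L^{2}\left(\R^{\pm}\right)$ solutions are identified with the Bessel-$K$ expressions~\eqref{eq:tildey1}--\eqref{eq:tildey2} outside $[a,b]$ and the Wronskian is reduced to the finite-interval Cauchy problems with data~\eqref{eq:bc1}--\eqref{eq:bc4}. Your explicit verification that $\sqrt{x}\,K_{\nu}$ decays while $\sqrt{x}\,I_{\nu}$ grows, and your care with the $\alpha$-independent normalization, make precise the steps the paper leaves implicit.
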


For $\beta = 2$ the spectral determinant can be found explicitly. We define $ v_-(x)$ and $ v_+(x)$ as the solutions to the equation $\ham_{\alpha,2} v = 0$ on $\mathbb{R}$ that behave as 
\begin{equation}
    v_-(x) = \frac{1}{2^{1/4}} D_{-\frac{1}{2}}(-\sqrt{2}x)  = \mathrm{e}^{-\frac{x^2}{2}}H_{-\frac{1}{2}}(-x)\quad \mathrm{for}\ x<a \label{eq:v1}
\end{equation}
and 
\begin{equation}
   v_+(x) = \frac{1}{2^{1/4}} D_{-\frac{1}{2}}(\sqrt{2}x) = \mathrm{e}^{-\frac{x^2}{2}}H_{-\frac{1}{2}}(x) \quad \mathrm{for}\ x>b\,, \label{eq:v2}
\end{equation}
respectively. Here $D_{\nu}(z)$ is the parabolic cylinder function and $H_{\nu}(x)$ Hermite function. These functions are $\frac{1}{\sqrt{2\pi}}$-multiples of
the above-mentioned functions $y_-$ and $y_+$ for $\beta=2$ \cite[46:4:1 and 46:4:5]{SO87}. The derivatives of $v_-$ and $v_+$ can be computed with the use of the formula 
\begin{equation}
  \frac{\mathrm{d}}{\mathrm{d}z}D_\nu(z) = \frac{z}{2}D_\nu(z)-D_{\nu+1}(z)\label{eq:parabolic:der}
\end{equation} 
(see, e.g., \cite[46:10:1]{SO87} and they are equal to
\begin{align}
   v_-'(x) &= \frac{1}{2^{\frac{1}{4}}}[x D_{-\frac{1}{2}}(-\sqrt{2}x)+\sqrt{2}D_{\frac{1}{2}}(-\sqrt{2}x)] \quad \mathrm{for} \ x<a\,,\label{eq:v1p}\eqskip
   v_+'(x) &= \frac{1}{2^{\frac{1}{4}}}[x D_{-\frac{1}{2}}(\sqrt{2}x)-\sqrt{2}D_{\frac{1}{2}}(\sqrt{2}x)] \quad \mathrm{for}\ x>b\,,\label{eq:v2p}
\end{align}

It can be straightforwardly found that they fulfill the conditions
\begin{align}
   v_-(a) &=  \frac{1}{2^{1/4}} D_{-\frac{1}{2}}(-\sqrt{2}a) \,,\label{eq:bc1v}\eqskip
   v_-'(a) &= \frac{1}{2^{1/4}} a D_{-\frac{1}{2}}(-\sqrt{2}a)+2^{1/4} D_{\frac{1}{2}}(-\sqrt{2}a) \,,\label{eq:bc2v}\eqskip
   v_+(b) &= \frac{1}{2^{1/4}} D_{-\frac{1}{2}}(\sqrt{2}b)\,,\label{eq:bc3v}\eqskip
   v_+'(b) &= \frac{1}{2^{1/4}} b D_{-\frac{1}{2}}(\sqrt{2}b)-2^{1/4} D_{\frac{1}{2}}(\sqrt{2}b)\,.\label{eq:bc4v}
\end{align}

Then Theorem~\ref{thm:main} has the following corollary.
\begin{corollary}\label{cor:main}
The spectral determinant for the operator $\ham_{\alpha,2}$ on $\mathbb{R}$ is equal to the Wronskian $v_-'(x) v_+(x)- v_-(x) v_+'(x)$ of the solution $v_-$ to the
Cauchy problem $\ham_{\alpha,2} v_- = 0$ with the initial conditions \eqref{eq:bc1v} and \eqref{eq:bc2v} and the solution $v_+$ to the Cauchy problem  $\ham_{\alpha,2} v_+ = 0$
with the initial conditions \eqref{eq:bc3v} and \eqref{eq:bc4v}.
\end{corollary}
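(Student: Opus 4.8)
The plan is to deduce the corollary from Theorem~\ref{thm:main} together with two explicit evaluations at $\alpha=0$. By Theorem~\ref{thm:main}, $\det\ham_{\alpha,2}=\frac{W(\alpha)}{W(0)}\det\ham_{0,2}$, where $W(\alpha)$ is the Wronskian of the solutions $y_-,y_+$ of $\ham_{\alpha,2}y=0$ associated with $\beta=2$ and $\ham_{0,2}=-\frac{\mathrm{d}^2}{\mathrm{d}x^2}+x^2$ is the harmonic oscillator. First I would note that $v_\pm$ and $y_\pm$ are both solutions of the linear homogeneous equation $\ham_{\alpha,2}u=0$ on $\R$ which, on the half-line $x<a$ (resp.\ $x>b$), are proportional with the $\alpha$-independent ratio $\sqrt{2\pi}$ --- this is precisely the identity \cite[46:4:1 and 46:4:5]{SO87} relating the Bessel-$K$ form and the parabolic cylinder form of the recessive solution. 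Since a solution of such an equation is determined by its value and derivative at a single point, the proportionality on a half-line forces $v_\pm=\tfrac{1}{\sqrt{2\pi}}\,y_\pm$ on all of $\R$, and hence $v_-'v_+-v_-v_+'=\tfrac{1}{2\pi}W(\alpha)$ for every $\alpha$. With this, the asserted formula $\det\ham_{\alpha,2}=v_-'v_+-v_-v_+'$ reduces to the single identity $\det\ham_{0,2}=\tfrac{1}{2\pi}W(0)=(v_-'v_+-v_-v_+')|_{\alpha=0}$, and the rest is the verification of this one equation.

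For its left-hand side I would use that $\ham_{0,2}$ has spectrum $\lambda_k=2k-1$ ($k\ge1$), so that $\zeta_{\ham_{0,2}}(s)=\sum_{k\ge1}(2k-1)^{-s}=(1-2^{-s})\,\zeta_{\mathrm{R}}(s)$, with $\zeta_{\mathrm{R}}$ the Riemann zeta function. Differentiating at the origin and inserting $\zeta_{\mathrm{R}}(0)=-\tfrac12$ gives $\zeta_{\ham_{0,2}}'(0)=-\tfrac12\log 2$, hence $\det\ham_{0,2}=e^{\frac12\log 2}=\sqrt2$. In particular $\det\ham_{0,2}\neq0$, so the hypothesis of Theorem~\ref{thm:maingen} is met, zero is not an eigenvalue of $\ham_{0,2}$, and $v_-,v_+$ are linearly independent with nonzero constant Wronskian.

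For the right-hand side, at $\alpha=0$ the functions $v_\pm$ are given on all of $\R$ by~\eqref{eq:v1}--\eqref{eq:v2}, with derivatives~\eqref{eq:v1p}--\eqref{eq:v2p}. I would compute the (constant) Wronskian at $x=0$, where the terms proportional to $x$ cancel, obtaining $v_-'(0)v_+(0)-v_-(0)v_+'(0)=2\,D_{1/2}(0)\,D_{-1/2}(0)$; then $D_{-1/2}(0)=\sqrt\pi\,2^{-1/4}/\Gamma(3/4)$, $D_{1/2}(0)=\sqrt\pi\,2^{1/4}/\Gamma(1/4)$ and the reflection formula $\Gamma(\tfrac14)\Gamma(\tfrac34)=\pi/\sin(\tfrac\pi4)=\pi\sqrt2$ give $2\pi/\bigl(\Gamma(\tfrac14)\Gamma(\tfrac34)\bigr)=\sqrt2$. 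Comparing with the previous paragraph, $(v_-'v_+-v_-v_+')|_{\alpha=0}=\sqrt2=\det\ham_{0,2}$, i.e.\ $W(0)=2\pi\det\ham_{0,2}$, and Theorem~\ref{thm:main} then yields the corollary. I do not expect a genuine obstacle here: the only point calling for care is the consistent bookkeeping of the several normalization constants --- the $2^{-1/4}$ in~\eqref{eq:v1}--\eqref{eq:v2}, the chain-rule factor $\sqrt2$ coming from the argument $\pm\sqrt2\,x$ of the parabolic cylinder functions, and the ratio $\sqrt{2\pi}$ between $v_\pm$ and $y_\pm$ --- so that both computations indeed land on the same value $\sqrt2$.
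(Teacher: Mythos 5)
Your proposal is correct and follows essentially the same route as the paper's proof: reduce to Theorem~\ref{thm:main} via the $\alpha$-independent proportionality $v_\pm=\frac{1}{\sqrt{2\pi}}\,y_\pm$, and verify that the Wronskian of $v_\pm$ at $\alpha=0$ equals $\sqrt{2}=\det \ham_{0,2}$ using $D_\nu(0)=\sqrt{2^\nu\pi}/\Gamma(\frac{1-\nu}{2})$ and the reflection formula. The only (harmless) difference is that you rederive $\det \ham_{0,2}=\sqrt{2}$ from the zeta function of the harmonic oscillator, whereas the paper cites~\cite{Fre18} for this value.
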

\begin{proof}
The functions $ v_-$ and $ v_+$ are $\frac{1}{\sqrt{2\pi}}$-multiples of the functions $ y_-$ and $ y_+$ for $\beta=2$, respectively. For $\alpha = 0$
the functions $ v_-$ and $ v_+$ (and hence $v_-$ and $v_+$) have due to the formula $D_\nu(0) = \frac{\sqrt{2^\nu \pi}}{\Gamma(\frac{1-\nu}{2})}$
(see, e.g.,~\cite[46:7]{SO87}) the values
\begin{align}
   v_-(0) =  v_+(0) &= \frac{\sqrt{\pi}}{\sqrt{2}\Gamma(\frac{3}{4})}\,,\label{eq:v10}\eqskip
   v_-'(0) = - v_+'(0) &= \frac{\sqrt{2\pi}}{\Gamma(\frac{1}{4})}\,.\label{eq:v10p}  
\end{align}
The Wronskian of $v_-$ and $v_+$ in the case of $\alpha=0$ is 
$$
   W(0) = v_-'(x) v_+(x)- v_-(x) v_+'(x) = \frac{2\pi}{\Gamma(\frac{1}{4})\Gamma(\frac{3}{4})} = \sqrt{2}\,,
$$
where we have used the formula $\Gamma(\frac{3}{4})\Gamma(\frac{1}{4}) = \pi\sqrt{2}$ (see, e.g., \cite[43:4:14]{SO87}). This together with Theorem~\ref{thm:main} and
the fact that the spectral determinant for the harmonic oscillator on $\mathbb{R}$ is equal to $\sqrt{2}$ (see \cite{Fre18}) concludes the proof.
\end{proof}

Moreover, one can obtain the spectral determinant for general positive integer $\beta$ as another corollary of Theorem~\ref{thm:main}.

\begin{corollary}\label{cor:integer}
Let $\beta$ be a positive integer. Then the spectral determinant for the operator $\ham_{\alpha,\beta}$ is equal to the Wronskian $v'_-(x)v_+(x)-v_-(x)v'_+(x)$ of
the solutions $v_\pm$ to the Cauchy problem $\ham_{\alpha,\beta} v_\pm = 0$ that are equal to $\frac{1}{\sqrt{\pi \left(1+\frac{\beta}{2}\right)}}$-multiples of the functions
$y_\pm$ defined by \eqref{eq:tildey1} and \eqref{eq:tildey2}.
\end{corollary}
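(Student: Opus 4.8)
The plan is to derive Corollary~\ref{cor:integer} from Theorem~\ref{thm:main} by evaluating the relevant Wronskian at $\alpha=0$ in closed form and matching it against the known spectral determinant of the unperturbed homogeneous oscillator. First I would note that the Wronskian is bilinear in its two arguments, so replacing $y_\pm$ by $v_\pm=\frac{1}{\sqrt{\pi(1+\beta/2)}}\,y_\pm$ multiplies it by the $\alpha$-independent constant $\frac{1}{\pi(1+\beta/2)}$; hence the quotient $W(\alpha)/W(0)$ appearing in Theorem~\ref{thm:main} equals the quotient $W_v(\alpha)/W_v(0)$ of the Wronskians of the $v_\pm$, and consequently $\det T_{\alpha,\beta}=\frac{W_v(\alpha)}{W_v(0)}\,\det T_{0,\beta}$. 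It therefore suffices to show $W_v(0)=\det T_{0,\beta}$, since then all the normalization factors collapse and $\det T_{\alpha,\beta}=W_v(\alpha)$.

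Next I would compute $W_v(0)$ explicitly. For $\alpha=0$ the functions $y_-$ and $y_+$ are globally defined solutions of $-y''+|x|^\beta y=0$ on $\mathbb{R}$, given on $(-\infty,0)$ and $(0,\infty)$ by \eqref{eq:tildey1}--\eqref{eq:tildey2}, and their Wronskian $y_-'y_+-y_-y_+'$ is constant along $\mathbb{R}$; I would evaluate it at $x=0$ via one-sided limits. Using the small-argument behaviour $K_\sigma(z)\sim\tfrac12\Gamma(\sigma)(z/2)^{-\sigma}$ as $z\to0^+$ (for $\sigma>0$), the derivative formulae recorded just before \eqref{eq:bc1}, the identity $\tfrac{1}{\beta+2}+\tfrac{\beta+1}{\beta+2}=1$, and Euler's reflection formula $\Gamma\!\big(\tfrac{1}{\beta+2}\big)\Gamma\!\big(\tfrac{\beta+1}{\beta+2}\big)=\pi/\sin\!\big(\tfrac{\pi}{\beta+2}\big)$, one finds that the fractional powers of $-x$ cancel and $y_\pm(0)=\tfrac12\Gamma\!\big(\tfrac{1}{\beta+2}\big)(\beta+2)^{1/(\beta+2)}$ while $y_-'(0)=-y_+'(0)=\tfrac12\Gamma\!\big(\tfrac{\beta+1}{\beta+2}\big)(\beta+2)^{(\beta+1)/(\beta+2)}$. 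This gives $W(0)=\frac{\pi(\beta+2)}{2\sin(\pi/(\beta+2))}$ and hence $W_v(0)=\frac{1}{\sin(\pi/(\beta+2))}$.

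Finally I would invoke the classical value of the spectral determinant of the pure homogeneous oscillator $T_{0,\beta}=-d^2/dx^2+|x|^\beta$ on the line, namely $\det T_{0,\beta}=\frac{1}{\sin(\pi/(\beta+2))}$ (see \cite{Vor80,Vor99}; for $\beta=2$ this is the value $\sqrt2$ already used in Corollary~\ref{cor:main}), which coincides exactly with the $W_v(0)$ computed above. Combined with the first step this yields $\det T_{\alpha,\beta}=W_v(\alpha)$, as claimed. The main obstacle is not the structure of the argument but the bookkeeping in the middle step --- tracking the fractional powers $(-x)^{\pm 1/2}$ and $(-x)^{\pm(\beta+1)/2}$ so that they cancel in the limit and verifying that the prescribed constant $\frac{1}{\sqrt{\pi(1+\beta/2)}}$ is precisely the one that makes $W_v(0)$ equal to $\det T_{0,\beta}$; the third step is where the restriction to those $\beta$ for which $\det T_{0,\beta}$ is known in closed form (here, positive integers) genuinely enters.
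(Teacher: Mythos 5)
Your proposal is correct and follows essentially the same route as the paper: evaluate the $\alpha=0$ Wronskian at the origin via the small-argument expansion of $K_\nu$ and the symmetry $y_-(0)=y_+(0)$, $y_-'(0)=-y_+'(0)$, obtain $W(0)=\tfrac{\pi(\beta+2)}{2\sin(\pi/(\beta+2))}$ by the reflection formula, and match against Voros's value $\det T_{0,\beta}=1/\sin(\pi/(\beta+2))$ so that the normalization $\frac{1}{\sqrt{\pi(1+\beta/2)}}$ makes the Wronskian of $v_\pm$ equal to the determinant. The only cosmetic difference is that you make the quadratic scaling of the Wronskian under the renormalization explicit, which the paper leaves implicit.
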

\begin{proof}
For $\alpha= 0$, the limiting values of $y_+(0+)$ and $y'_+(0+)$ can be expressed using the approximate formula 
\begin{equation}
   K_\nu(z) \approx \frac{\Gamma(\nu) z^{-\nu}}{2^{1-\nu}}+\frac{\Gamma(-\nu)z^{\nu}}{2^{1+\nu}} \label{eq:51:9:2}
\end{equation} 
for small $z$ (see, e.g., \cite[51:9:2]{SO87}). Using the above-mentioned expansion for~\eqref{eq:bc3} and~\eqref{eq:bc4}, we get
\begin{align}
   y_+(b) &= \frac{1}{2}\Gamma\left(\fr{1}{\beta+2}\right)(\beta+2)^{\frac{1}{\beta+2}}+O(b)\,,\quad \beta>-1\,, \alpha=0 \label{eq:integer:y+}\eqskip
   y'_+(b) &= -\frac{1}{2}\Gamma\left(\frac{\beta+1}{\beta+2}\right)(\beta+2)^{\frac{\beta+1}{\beta+2}}+O(b^{\beta+1})\,,\quad \beta>-1\,, \alpha=0\,.\label{eq:integer:y+prime}
\end{align}
The Wronskian of $y_-$ and $y_+$ at zero and $\alpha=0$, may be obtained using the symmetry relations $y_-(0) = y_+(0)$, $y'_-(0) = -y'_+(0)$, and equals
\begin{align}
  W(0) &= -2y_+(0)y'_+(0)\nonumber\eqskip
   &= \fr{\beta+2}{2}\Gamma\left(\frac{1}{\beta+2}\right)\Gamma\left(1-\fr{1}{\beta+2}\right)\nonumber\eqskip
   &= \fr{1}{2}\pi (\beta+2)\frac{1}{\sin\left(\frac{\pi}{\beta+2}\right)}\,,\label{eq:wronskian:integer}
\end{align}
where we have used the formula 
\begin{equation}
  \Gamma(1+x)\Gamma(-x) = \frac{\pi}{\sin{(-\pi x)}} \label{eq:43:5:1}
\end{equation} 
(see, e.g., \cite[43:5:1]{SO87}).

For positive integer $\beta$ and $\alpha=0$, the determinant is given by the expression 
\begin{equation}
 \mathrm{det}(T_{0,\beta}) =\frac{1}{\sin{\left(\fr{\pi}{\beta+2}\right)}} \label{eq:integer:det0}
\end{equation} 
(see~\cite{Vor80} and~\cite{Vor99} for even and odd $\beta$, respectively). This together with~\eqref{eq:wronskian:integer} and Theorem~\ref{thm:main} yields the result.
\end{proof}

\begin{remark}
Our results can be generalized also to cases when $a>0$ or $b<0$. To do so, the solutions $ y_-$ or $ y_+$ should be prolonged to
$\mathbb{R}_+$ or $\mathbb{R_-}$, respectively, using the $\sqrt{|x|}$-multiple of a linear combination of the
modified Bessel $I$ and $K$ functions so that the function is continuous at zero with continuous derivative at zero. For the
sake of brevity of the paper we do not carry out this construction here.
\end{remark}

\section{Examples} We shall now provide some examples illustrating different possible applications of Theorem~\ref{thm:maingen}. The simplest, if somewhat more artificial, are those where the potential $q$ contains
a term that cancels the part of $V$ on the support of $q$. This makes the
initial boundary value problems in Theorem~\ref{thm:main} simpler to solve, and is the subject of Example~\ref{ex:x2} where we then analyse the determinant as a function of the length of the support of $q$. Next
we consider a step-like perturbation of the harmonic oscillator (Example~\ref{ex:x2:step}), and
finally illustrate a perturbation of the $x^4$ potential in Example~\ref{ex:x4}.

\begin{example}\label{ex:x2}
In the first example, we consider $\beta=2$ and the perturbation $q=-x^2$ with $\alpha = 1$ on the interval $[0,b]$ for a given positive real
parameter $b$. Clearly, the operator $\ham_{1,2}$ acts as the negative second derivative in the interval $[0,b]$, so the function $ v_-$ on $(-\infty,0]$
is given by \eqref{eq:v1}, the function $ v_+$ on $[b,\infty)$ is given by \eqref{eq:v2} and on $[0,b]$ as $v_+ (x) = Ax +B$ with the condition of
continuity of the function value and the derivative at $b$. For their derivatives one can use the expressions \eqref{eq:v1p} and \eqref{eq:v2p}. We
have \eqref{eq:bc3v}, \eqref{eq:bc4v}; the functions $v_-(0+)$ and $v_-'(0+)$ are equal to \eqref{eq:v10} and \eqref{eq:v10p}, respectively. Moreover,
\begin{align*}
 v_+(b-) &= Ab + B\,,\eqskip
 v_+'(b-) &= A\,,\eqskip
 v_+(0+) &= B = v_+(b-)-b v_+'(b-)\,,\eqskip
 v_+'(0+) &= A = v_+'(b-)\,.
\end{align*}
Finally, the Wronskian of $v_-$ and $v_+$ for $\alpha = 1$ is
\begin{align*}
  W(1) &= v_-'(0+) v_+(0+)- v_-(0+) v_+'(0+)
\eqskip
  &= \frac{2^{\frac{1}{4}}\sqrt{\pi}}{\Gamma\left(\frac{1}{4}\right)}[-b^2 D_{-\frac{1}{2}}\left(\sqrt{2}b\right)+\sqrt{2}b
  D_{\frac{1}{2}}\left(\sqrt{2}b\right)+D_{-\frac{1}{2}}\left(\sqrt{2}b\right)]
\eqskip
 & -\frac{\sqrt{\pi}}{2^{\frac{3}{4}}\Gamma\left(\frac{3}{4}\right)}[b D_{-\frac{1}{2}}\left(\sqrt{2}b\right)-
 \sqrt{2}D_{\frac{1}{2}}\left(\sqrt{2}b\right)]  \,.
\end{align*}
This is also by Corollary~\ref{cor:main} the spectral determinant of the operator $\ham_{1,2}(b)$. In Figure~\ref{ex:x2:fig1} we plot the dependence of the
spectral determinant on the parameter $b$. We can see that its value at $b=0$ is equal to $\sqrt{2}$, the determinant of the harmonic oscillator on
$\mathbb{R}$~\cite{Fre18}. To be more precise, we have the asymptotic behaviour $W(1)\approx \sqrt{2}+ \mathcal{O}(b^3)$ for $b\ll 1$, where we have used the formula
$$
  D_\nu(z) \approx \sqrt{2^\nu \pi }\left[\frac{1-(\frac{1}{4}+\frac{\nu}{2})z^2}{\Gamma\left(\frac{1-\nu}{2}\right)}-\frac{z\sqrt{2}}{\Gamma
  \left(-\frac{\nu}{2}\right)}\right]\quad \mathrm{for\ }z\ll 1
$$ 
(see, e.g.,~\cite[46:9:1]{SO87}). The asymptotic behaviour for large $b$ is 
$$
  W(1)\approx \frac{1}{\Gamma\left(\frac{1}{4}\right)}\mathrm{e}^{-\frac{b^2}{2}}b^{\frac{3}{2}}\sqrt{\pi}\left(1+\mathcal{O}\left(b^{-1}\right)\right)\,,
$$
where we have used the formula
$$
 D_\nu(z) \approx z^\nu \mathrm{e}^{-\frac{z^2}{4}}\left(1+\mathcal{O}\left(z^{-2}\right)\right) \mbox{ for } z \gg 1
$$
from \cite[46:9:2]{SO87}.

\begin{figure}
\centering
\includegraphics[height=5cm]{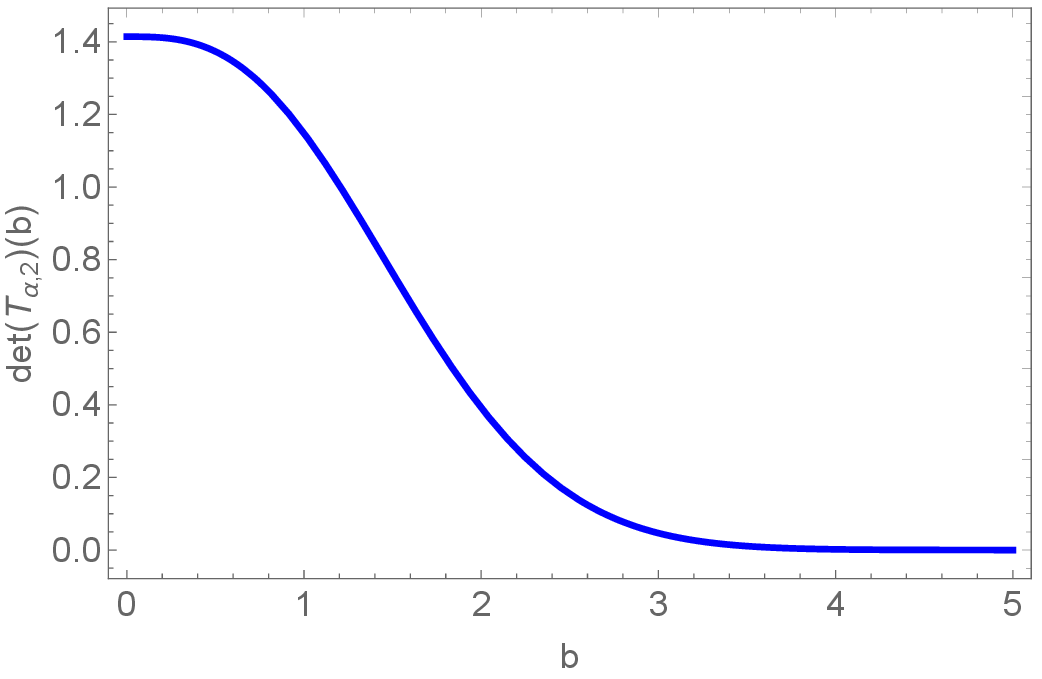}
\caption{Example~\ref{ex:x2}: dependence of the spectral determinant on the parameter $b$.}
\label{ex:x2:fig1}
\end{figure}
\end{example}

\begin{example}\label{ex:x2:step}
In the next example, we again study the case $\beta=2$, this time we choose the step-like potential $q(x) = \Theta(1-x) \Theta(x+1)$, where $\Theta$ is the Heaviside theta function.
Since the problem is symmetric with respect to $x=0$, we will focus on the function $v_+$. For $x>1$, it is given by the expression~\eqref{eq:v2} and its derivative
by~\eqref{eq:v2p}. In the interval $(-1,1)$, the function $v_+$ is the solution to the differential equation $-v''+(x^2+\alpha) v = 0$, which is given as a linear combination of the
functions
$$
  v_+(x) = c_1 D_{-\frac{1}{2}(1+\alpha)}(\sqrt{2}x)+c_2 D_{\frac{1}{2}(\alpha-1)}(i \sqrt{2}x)\,.
$$
This can be verified using the formula $\frac{\mathrm{d}^2}{\mathrm{d}z^2} D_\nu(z) = \left(\frac{z^2}{4}-\frac{1}{2}-\nu\right)D_{\nu}(z)$ (see, e.g. \cite[46:10:2]{SO87}). Due
to~\eqref{eq:parabolic:der}, the derivative on $(-1,1)$ is
\begin{align}
  v_+'(x) &= c_1\left[-\sqrt{2} D_{\frac{1}{2}(1-\alpha)}(\sqrt{2}x)+x D_{-\frac{1}{2}(1+\alpha)}(\sqrt{2}x)\right]\nonumber\eqskip
  & \hspace*{5mm} + c_2\left[-i\sqrt{2}D_{\frac{1}{2}(1+\alpha)}(i\sqrt{2}x)-x D_{\frac{1}{2}(\alpha-1)}(i\sqrt{2}x)\right]\,.\label{eq:x2step:v+prime}
\end{align}
The procedure to obtain the function value and derivative at $x=0$ is straightforward but cumbersome; ``sewing'' the function value and the derivative at $x=1$ we find the constants
$c_1$ and $c_2$ and then substitute $x=0$ to \eqref{eq:x2step:v+prime}. The Wronskian of the solutions $v_+$ and $v_-$ can be expressed using the symmetry
$v_+(0) = v_-(0)$, $v_+'(0) = -v_-'(0)$ as $W(\alpha) = -2 v_+(0) v_+'(0)$. The choice of the constants multiplying $v_\pm$ in Section~\ref{sec:anharmonic} assures that the Wronskian
for $\alpha=0$ is equal to the determinant for $\alpha=0$ and hence the determinant for general $\alpha$ is equal to the Wronskian.  We used Mathematica to compute it and found
\begin{align}\label{det1}
  \mathrm{det\,}T_{\alpha,2} &= \frac{i}{\pi}2^{1-\frac{\alpha}{2}}\cos\left(\frac{\pi\alpha}{2}\right)\frac{k_1(\alpha) k_2(\alpha)}{k_3^2(\alpha)}
\end{align}
with 
\begin{gather*}
 \begin{array}{lll}
  k_1(\alpha) &= 2^{\frac{\alpha}{2}}\Gamma\left(\fr{3+\alpha}{4}\right)D_{\frac{1}{2}}\left(\sqrt{2}\right)D_{-\frac{1}{2}(1+\alpha)}\left(\sqrt{2}\right)-2^{\frac{\alpha}{2}}
  \Gamma\left(\fr{3+\alpha}{4}\right)D_{-\frac{1}{2}}\left(\sqrt{2}\right)D_{\frac{1}{2}(1-\alpha)}\left(\sqrt{2}\right)\eqskip
& \hspace*{5mm} +\Gamma\left(\fr{3-\alpha}{4}\right)\left[\left(\sqrt{2}D_{-\frac{1}{2}}(\sqrt{2})-D_{\frac{1}{2}}(\sqrt{2})\right)
D_{\frac{1}{2}(\alpha-1)}(i\sqrt{2})+iD_{-\frac{1}{2}}(\sqrt{2})D_{\frac{1}{2}(1+\alpha)}(i\sqrt{2})\right]\,,\eqskip
  k_2(\alpha) &= 2^{\frac{\alpha}{2}}\Gamma(\frac{1+\alpha}{4})D_{\frac{1}{2}}(\sqrt{2})D_{-\frac{1}{2}(1+\alpha)}(\sqrt{2})
  -2^{\frac{\alpha}{2}}\Gamma(\frac{1+\alpha}{4})D_{-\frac{1}{2}}(\sqrt{2})D_{\frac{1}{2}(1-\alpha)}(\sqrt{2})\eqskip
 &-i\Gamma(\frac{1-\alpha}{4})D_{\frac{1}{2}(-1+\alpha)}(i\sqrt{2})[\sqrt{2}D_{-\frac{1}{2}}(\sqrt{2})-D_{\frac{1}{2}}(\sqrt{2})]
 +\Gamma(\frac{1-\alpha}{4})D_{-\frac{1}{2}}(\sqrt{2})D_{\frac{1}{2}(1+\alpha)}(i\sqrt{2})\,,\eqskip
  k_3(\alpha) &= \sqrt{2}D_{\frac{1}{2}(1-\alpha)}(\sqrt{2})D_{\frac{1}{2}(-1+\alpha)}(i\sqrt{2})-D_{-\frac{1}{2}(1+\alpha)}(\sqrt{2})
  [2D_{\frac{1}{2}(-1+\alpha)}(i\sqrt{2})+i\sqrt{2}D_{\frac{1}{2}(1+\alpha)}(i\sqrt{2})]\,.
\end{array}
\end{gather*}
The graph showing the dependence of the spectral determinant on the parameter~$\alpha \in (-40,2)$ is given in Figure~\ref{fig:x2:step}. Since the expression given by~\eqref{det1} is quite involved,
we have not attempted to determine its precise asymptotic behaviour. However, a (numerical) comparison with the
determinant for a Sturm-Liouville operator on a bounded interval of length two with Dirichlet boundary conditions, namely,
$2\sinh\left(2\sqrt{\alpha}\right)/\sqrt{\alpha}$ points to the behaviour for large (in absolute value)~$\alpha$
being of the same exponential type, albeit multiplied by a different power of~$\alpha$, namely, $\frac{1}{6}\sqrt {\alpha}\sinh\left (2\sqrt {\alpha} \right) $.

\begin{figure}
\centering
\includegraphics[height=5cm]{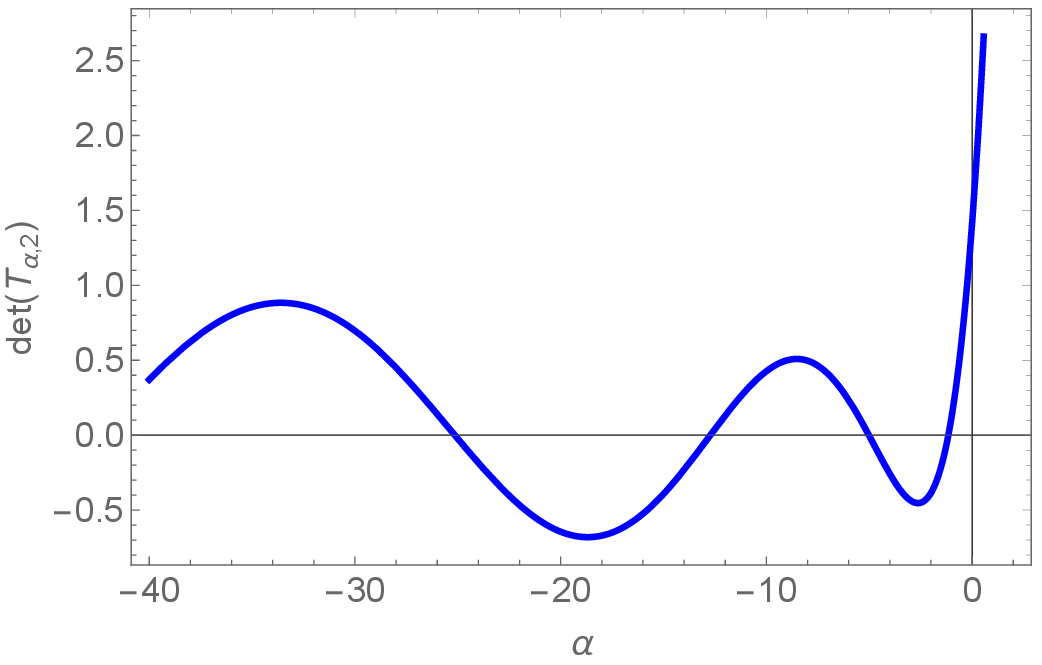}
\caption{Example~\ref{ex:x2:step}: dependence of the spectral determinant on $\alpha$.}
\label{fig:x2:step}
\end{figure}
\end{example}

\begin{example}\label{ex:x4}
Let us assume the case with $\beta=4$ and the perturbation $q(x) = x^4$ with the support $[0,1]$. Hence we have $a=0$, $b=1$. Let us first compute the Wronskian of the solutions $ y_-$ and $ y_+$ for the unperturbed case $\alpha =0$. We have from \eqref{eq:tildey1}--\eqref{eq:tildey2}
\begin{align*}
   y_- (x) &= \sqrt{-x}\, K_{\frac{1}{6}}\left(-\frac{1}{3}x^3\right)\,,\quad \alpha = 0, x<0\,,\eqskip
   y_+ (x) &= \sqrt{x} \,K_{\frac{1}{6}}\left(\frac{1}{3}x^3\right) \,,\quad \alpha = 0, x>0 
\end{align*}
and from \eqref{eq:wronskian:integer} the Wronskian of the two solutions at zero is $W(0) = 6\pi$.

For general $\alpha$ we obtain, using equations~\eqref{eq:bc3}, \eqref{eq:bc4}, \eqref{eq:integer:y+} and~\eqref{eq:integer:y+prime} and the symmetry of the problem, the conditions
\begin{align*}
   y_-(0) &= \frac{3^{1/6}}{2^{5/6}}\Gamma\left(\frac{1}{6}\right) \,,\eqskip
   y_+(1) &= K_{\frac{1}{6}}\left(\frac{1}{3}\right)\,,\eqskip
   y_-'(0) &= \frac{3^{5/6}}{2^{1/6}}\Gamma\left(\frac{5}{6}\right) \,,\eqskip
   y_+'(1) &= -K_{\frac{5}{6}}\left(\frac{1}{3}\right)\,.
\end{align*}
We construct the solution $y_{+}(x)$ in $[0,1]$ as the linear combination of the two independent solutions to the equation $-y''(x)+(1+\alpha) x^4 y(x) = 0$, namely 
$$
  \sqrt{x}\, (1+\alpha)^{\frac{1}{12}} K_{\frac{1}{6}}\left(\frac{1}{3}x^3\sqrt{1+\alpha}\right)
$$
and 
$$
  \sqrt{x}\, (1+\alpha)^{\frac{1}{12}} I_{\frac{1}{6}}\left(\frac{1}{3}x^3\sqrt{1+\alpha}\right)\,,
$$
Where $I_\nu$ denotes the modified Bessel $I$ function. Finding the coefficients of the linear combination by ``sewing'' the solutions at $x=1$, we obtain after a straightforward computation 
\begin{gather*}
\begin{array}{lll}
  y_+ (x) & = & \frac{\sqrt{x}}{3}
  \left[\sqrt{1+\alpha}K_{\frac{1}{6}}\left(\frac{1}{3}\right)\, I_{-\frac{5}{6}}\left(\frac{\sqrt{1+\alpha}}{3}\right)
  +K_{\frac{5}{6}}\left(\frac{1}{3}\right)
  I_{\frac{1}{6}}\left(\frac{\sqrt{1+\alpha}}{3}\right)\right]K_{\frac{1}{6}}\left(\frac{\sqrt{1+\alpha}}{3}x^3\right)  \eqskip
\eqskip
& & \hspace*{10mm} + \frac{\sqrt{x}}{3}\left[\sqrt{1+\alpha} K_{\frac{1}{6}}\left(\frac{1}{3}\right)
K_{\frac{5}{6}}\left(\frac{\sqrt{1+\alpha}}{3}\right)-K_{\frac{1}{6}}\left(\frac{\sqrt{1+\alpha}}{3}\right)
K_{\frac{5}{6}}\left(\frac{1}{3}\right)\right]I_{\frac{1}{6}}\left(\frac{\sqrt{1+\alpha}}{3}x^3\right).
\end{array}
\end{gather*}
The spectral determinant for $\alpha=0$ can be obtained from formula~\eqref{eq:integer:det0}
and is equal to $2$. The factor relating the functions $y_\pm$ with $v_\pm$ from Corollary~\ref{cor:integer}
is $\frac{1}{\sqrt{3\pi}}$, hence the spectral determinant is equal to the $\frac{1}{3\pi}$-multiple of the Wronskian
of the solutions $y_\pm$ at zero. A straightforward computation from Corollary~\ref{cor:integer} using Mathematica yields
\begin{align*}
  \mathrm{det}\,T_{\alpha,4} &= \frac{1}{3}\left[K_{\frac{5}{6}}\left(\frac{1}{3}\right)\left(
  \left(1+\alpha\right)^{1/12}I_{-\frac{1}{6}}\left(\frac{\sqrt{1+\alpha}}{3}\right)+
  \left(1+\alpha\right)^{-1/12}I_{\frac{1}{6}}\left(\frac{\sqrt{1+\alpha}}{3}\right)
  \right)\right.\\
  &  \hspace*{5mm}\left.+K_{\frac{1}{6}}\left(\frac{1}{3}\right)\left(
  \left(1+\alpha\right)^{5/12}I_{-\frac{5}{6}}\left(\frac{\sqrt{1+\alpha}}{3}\right)+
  \left(1+\alpha\right)^{7/12}I_{\frac{5}{6}}\left(\frac{\sqrt{1+\alpha}}{3}\right)
  \right)\right]\,.
\end{align*}
The dependence of the spectral determinant on $\alpha$ for $\alpha \in [-3000,100]$ can be found
in Figure~\ref{ex:x4:fig1}.

\begin{figure}
\centering
\includegraphics[height=5cm]{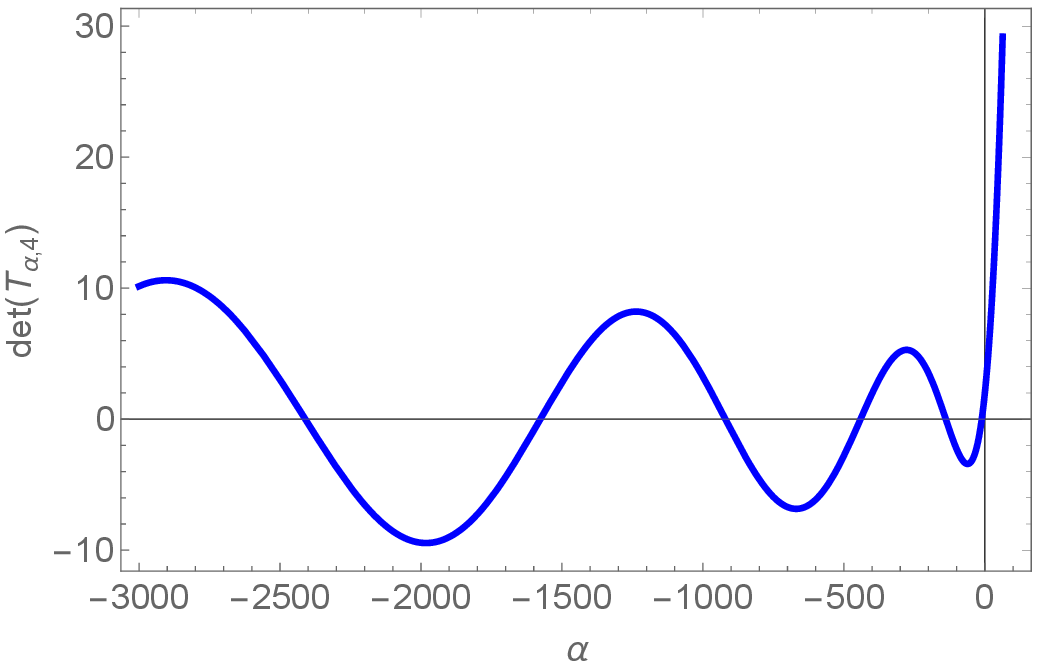}
\caption{Example~\ref{ex:x4}: dependence of spectral determinant on $\alpha$.}
\label{ex:x4:fig1}
\end{figure}
\end{example}

\section*{Acknowledgements}
P.F. was partially supported by the Funda\c c\~{a}o para a Ci\^{e}ncia e a Tecnologia (Portugal) through project UIDB/00208/2020.
J.L. was supported by the Czech Science Foundation within the project 22-18739S. We gratefully acknowledge discussions with Petr Siegl concerning conditions on potentials ensuring~\eqref{asymptotic} and~\eqref{spectr_cond}. We thank the reviewers for their valuable comments that improved the paper.

\section*{Data availability statement}
The manuscript has no associated data.

\section*{Conflict of interests}
The authors state that they have no conflicts of interest.

\end{document}